\title[Non-noetherian polynomial functors]{Ideal-theoretic non-noetherianity of polynomial functors in positive characteristic}
\author{Karthik Ganapathy}
\address{Department of Mathematics, University of California, San Diego, CA}
\email{\href{mailto:kganapathy@ucsd.edu}{kganapathy@ucsd.edu}}
\thanks{}
\urladdr{\url{https://sites.google.com/view/karthik-ganapathy}}
\keywords{Frobenius splitting, multisymmetric polynomials, Newton's identities, noetherianity, representation stability, strict polynomial functors}
\subjclass{13A50 (Primary), 13A35, 05E05, 18A25 (Secondary)}
\theoremstyle{plain}
\newtheorem{mainthm}{Theorem}
\begin{document}
\begin{abstract}
A long-standing open problem in representation stability is whether every finitely generated commutative algebra in the category of strict polynomial functors satisfies the noetherian property. In this paper, we resolve this problem negatively over fields of positive characteristic using ideas from invariant theory. Specifically, we consider the algebra $P$ of polarizations of elementary symmetric polynomials inside the ring of all multisymmetric polynomials in $p \times \infty$ variables. We show $P$ is not noetherian based on two key facts: (1) the $p$-th power of every multisymmetric polynomial is in $P$ (our main technical result) and (2) the ring of multisymmetric polynomials is Frobenius split.
\end{abstract}
\maketitle
\section{Introduction}
Given a strict polynomial functor $X$ over a field $k$, the inverse limit $X_{\infty}$ of the affine schemes $X(k^n)$ is a $\GL_{\infty}$-equivariant affine scheme that exhibits surprising finiteness properties. For instance, when $X$ is a finite direct sum of (Frobenius twists of) the identity functor, the scheme $X$ is $\GL$-noetherian, i.e., dcc holds for $\GL_{\infty}$-stable closed subschemes. More generally, for all finite length $X$, Draisma \cite{dra19top} proved that dcc holds for $\GL_{\infty}$-stable closed \textit{subsets} of the underlying topological space $|X_{\infty}|$. Several authors \cite{bdes23pol, bds25charp} have since developed this theory and subsequently used it to study asymptotic properties of tensors and related objects \cite{bdes23tensors, hs24tensor}. An important open problem is whether the scheme $X_{\infty}$ is $\GL$-noetherian for all finite length $X$. In this paper, we \textit{negatively} answer this question over fields of positive characteristic.
For the remainder of the paper, we work with $\GL$-algebras which are the coordinate rings of the spaces introduced above; $k$ will always be a field of characteristic $p > 0$.

The finite group $\fS_p$ acts on the $\GL$-algebra $R = \Sym(k^p \otimes k^{\infty})$. The invariant subring $R^{\fS_p}$ is the ring of multisymmetric polynomials in $p \times \infty$ variables. We let $P$ be the $\GL$-algebra of \textbf{polarizations}, i.e., the subalgebra of $R$ generated by the $\GL_{\infty}$-orbits of the elementary symmetric polynomials.

\begin{mainthm}\label{thm:glalg}
The $\GL$-algebra $P$ is not $\GL$-noetherian.
\end{mainthm}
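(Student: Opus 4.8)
The plan is to produce an infinite strictly increasing chain of $\GL$-stable ideals inside $P$ by transporting bad ideals from the much larger ring $A := R^{\fS_p}$ of all multisymmetric polynomials down into $P$ along the Frobenius. In positive characteristic $A$ is badly behaved: it is not even finitely generated as a $\GL$-algebra, since multisymmetric invariants require generators of unbounded degree, so $A$ is certainly not $\GL$-noetherian. The content of the construction is that the finitely generated proper subalgebra $P \subseteq A$ still "sees" these bad ideals, and what makes this possible are the two facts advertised in the abstract: (1) $f^p \in P$ for every $f \in A$, and (2) $A$ is Frobenius split.

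First I would dispose of (2) and extract the consequence I need. The standard monomial Frobenius splitting $\phi$ of the polynomial ring $R$ — which sends a monomial $\prod x_{ij}^{a_{ij}}$ to $\prod x_{ij}^{a_{ij}/p}$ when every exponent is divisible by $p$, and to $0$ otherwise — commutes with permutations of the variables, hence is $\fS_p$-equivariant, and therefore restricts to a Frobenius splitting of $A = R^{\fS_p}$. (This splitting is not $\GL_\infty$-equivariant, but that is irrelevant here.) It follows that $A^p := \{a^p : a \in A\}$ is a direct summand of $A$ as an $A^p$-module, so the inclusion $A^p \hookrightarrow A$ is pure: $\mathfrak b A \cap A^p = \mathfrak b$ for every ideal $\mathfrak b$ of $A^p$.

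Next I would prove (1), which I expect to be the technical heart and the main obstacle. Since $A$ is spanned by orbit sums of monomials, since $p$-th powers are additive in characteristic $p$, and since the $p$-th power of the orbit sum of a monomial $m$ is the orbit sum of the monomial obtained from $m$ by multiplying every exponent by $p$, the claim reduces to showing that the orbit sum of any monomial all of whose exponents are divisible by $p$ lies in $P$. The tools are identities of Newton type — power sums are integer polynomials in the elementary symmetric polynomials, so the power sums and their $\GL_\infty$-translates lie in $P$ — together with the characteristic-$p$ fact that elementary symmetric polynomials commute with raising the variables to the $p$-th power, which ties the $p$-divisible monomials back into $P$; a careful bookkeeping of polarizations should then close the gap.

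Finally, the transfer. As $A$ is not $\GL$-noetherian, fix an infinite strictly increasing chain $J_1 \subsetneq J_2 \subsetneq \cdots$ of $\GL$-stable ideals of $A$ and choose $w_n \in J_n \setminus J_{n-1}$; then $w_{n+1} \notin K_n$, where $K_n \subseteq J_n$ is the ideal of $A$ generated by the $\GL_\infty$-orbits of $w_1, \dots, w_n$. By (1) each $w_n^p$ lies in $P$, so $I_n$, the ideal of $P$ generated by the $\GL_\infty$-orbits of $w_1^p, \dots, w_n^p$, is a genuine $\GL$-stable ideal of $P$, and $I_n \subseteq I_{n+1}$. Suppose the chain stabilised, so $w_{n+1}^p \in I_n$. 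Then $w_{n+1}^p$ lies in the ideal of $A$ generated by the elements $(gw_i)^p = g \cdot w_i^p$ with $g \in \GL_\infty$ and $i \le n$; since $p$-th powers are additive this ideal equals $\mathfrak b A$, where $\mathfrak b := \{h^p : h \in K_n\}$ is an ideal of $A^p$. As $w_{n+1}^p \in A^p$, purity gives $w_{n+1}^p \in \mathfrak b A \cap A^p = \mathfrak b$, i.e.\ $w_{n+1}^p = h^p$ for some $h \in K_n$; since $A$ is a domain this forces $w_{n+1} = h \in K_n$, contradicting the choice of $w_{n+1}$. Hence $I_1 \subsetneq I_2 \subsetneq \cdots$ is an infinite strictly increasing chain of $\GL$-stable ideals of $P$, so $P$ is not $\GL$-noetherian. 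The only non-formal inputs are (1), the main obstacle, and the non-finite-generation of $A$, which is classical and can be witnessed by explicit multisymmetric functions of unbounded degree.
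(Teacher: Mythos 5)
Your deduction of Theorem~\ref{thm:glalg} is correct, and it takes a genuinely different route from the paper's. The paper fixes the single $\GL$-stable ideal $I\subset P$ generated by all $M_{\alpha}^p$, assumes it is generated by orbits of elements of degree $\leq pd$, applies a graded Frobenius splitting of $\Gamma^p(S)$ to the resulting expression for $M_{\omega}^p$ with $\omega=(1,\ldots,1)$ of length $N>d$, and contradicts Rydh's theorem (Theorem~\ref{thm:rydh}) that $M_{\omega}$ is a minimal generator of $\Gamma^p(S)$. You instead transport an infinite strictly increasing chain of $\GL$-stable ideals from $A=R^{\fS_p}$ into $P$: Theorem~\ref{thm:pthroot} puts the Frobenius powers of the chosen elements $w_n$ into $P$, and the splitting enters through the standard purity consequence that ideals of $A^p$ are contracted from $A$, which together with injectivity of Frobenius on the domain $A$ keeps the transported chain strict. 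Your version is more modular: it really shows that for a reduced $F$-split $\GL$-algebra $A$ and a $\GL$-subalgebra $B$ containing $F(A)$, failure of the ascending chain condition descends from $A$ to $B$, and it only needs that $A$ is not finitely generated, whereas the paper uses the finer fact that specific high-degree power sums are minimal generators. In exchange, the paper's argument exhibits one explicit non-finitely-generated ideal of $P$.

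Three points to tighten. First, your fact (1) is Theorem~\ref{thm:pthroot}, and your treatment of it is only a plan: the reduction to orbit sums of monomials with all exponents divisible by $p$ is fine, but one-column Newton identities plus $E_i(x^p)=E_i(x)^p$ do not by themselves handle monomials spread over several columns; the paper's proof requires an induction combining a polarized Newton identity (Corollary~\ref{cor:multinewt}, to lower an exponent $\geq p$ in one column) with a flattening step (Corollary~\ref{cor:i<p}, moving excess degree into a fresh column via the $\GL$-action), applied to the power sums $M_{p\alpha}$ after reducing to Rydh's generating set. Since the statement under review is Theorem~\ref{thm:glalg}, invoking Theorem~\ref{thm:pthroot} as an input is legitimate (the paper does exactly this), but your sketch should not be mistaken for a proof of it. Second, the monomial splitting of $R$ extended $k$-linearly is not a left inverse of Frobenius: it sends $c^p$ to $c^p$ rather than to $c$ for scalars with $c^p\neq c$, so the projection formula forces the $p^{-1}$-semilinear extension, hence a perfect coefficient field (or a chosen $k^p$-linear projection $k\to k^p$); the paper sidesteps this by first passing to $\overline{k}$ and noting that non-noetherianity descends along this base change, and you should do one of the two. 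Third, ``$A$ is not finitely generated as a $\GL$-algebra, hence has a strictly increasing chain of $\GL$-stable ideals'' needs the easy graded Nakayama-type remark that orbit-finite generation of the ideal $A_{>0}$ would force algebra finite generation; and the non-finite generation itself is exactly the input the paper takes from Rydh --- it is a modular phenomenon, false in characteristic zero, so it deserves a citation rather than the label ``classical.''
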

\begin{remark}
   We \cite{gan24non} had previously shown that $\Sym(\lw^2(k^{\infty}))$ is not $\GL$-noetherian when $\chark(k) = 2$. However, the ad hoc construction in loc.~cit.~is very specific to the characteristic. The idea can ultimately be traced to Vaughan-Lee's construction \cite{vl75vars} of a variety of abelian-by-nilpotent Lie algebras that is not \textit{finitely based} in characteristic two; such varieties are finitely based in every other characteristic. However, see Remark~\ref{ss:rmks}(1).
\end{remark}
Our proof of Theorem~\ref{thm:glalg} hinges on the next result. To place it in context, we recall the notion of separating invariants introduced by Derksen--Kemper \cite{dk02cit}. For a finite group $G$ acting on a finite-dimensional $k$-vector space $V$, a subalgebra $S \subset k[V]^G$ is \textbf{separating} if the elements of $S$ distinguish $G$-orbits. Draisma--Kemper--Wehlau \cite{dkw08sep} proved that the subalgebra $P_n \subset k[V^{\oplus n}]^G$ generated by the polarizations (i.e., $\GL_n$-orbit) of a generating set of $k[V]^G$ is a separating subalgebra for the diagonal action of $G$ on $V^{\oplus n}$. Grosshans \cite{gross07vect} subsequently showed that $k[V^{\oplus n}]^G$ is integral over $P_n$, and using a lemma of van der Kallen \cite{vdK05err}, observed that $k[V^{\oplus n}]^G$ is precisely the $p$-root closure of $P_n$ in $k[V^{\oplus n}]$, i.e., there exists $d(n) \in \bN$ such that for all $f \in k[V^{\oplus n}]^G$ the element $f^{p^{d(n)}} \in P_n$. We prove $d(n) = 1$ for the permutation representation of $\fS_p$ in characteristic $p$.
\begin{mainthm}\label{thm:pthroot}
Given $f \in R^{\fS_p}$, the element $f^p \in P$.
\end{mainthm}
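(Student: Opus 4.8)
The plan is to reduce Theorem~\ref{thm:pthroot} to checking a spanning set of $R^{\fS_p}$, and then to carry out that reduction with Newton's identities in characteristic~$p$; this simultaneously reproves, with the explicit constant $d(n)=1$, the containment of $R^{\fS_p}$ in the $p$-root closure of $P$.

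\emph{Step 1 (reduction to monomial multisymmetric polynomials).} The set $S:=\{f\in R^{\fS_p}: f^p\in P\}$ is a $\GL$-stable $k$-subalgebra of $R^{\fS_p}$ containing $P$: it is closed under products and under the $\GL_\infty$-action (since $P$ is), it is closed under addition because $(f+g)^p=f^p+g^p$ in characteristic~$p$, and $P\subseteq S$ trivially. The monomial multisymmetric polynomials $\mathcal O(m)=\sum_{m'\in\fS_p\cdot m}m'$ span $R^{\fS_p}$ over $k$, so it suffices to show $\mathcal O(m)^p\in P$ for every monomial $m$. Since the $p$-th power map on $R$ is additive and commutes with the $\fS_p$-action, $\mathcal O(m)^p=\mathcal O(m^p)$; hence the task is to prove $\mathcal O(M)\in P$ for every monomial $M$ all of whose exponents are divisible by~$p$.

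\emph{Step 2 (Frobenius twist and Newton's identity).} The basic case is $M=\prod_j x_{1j}^{p\alpha_j}$, where $\mathcal O(M)=p_{p\alpha}:=\sum_{i=1}^p\prod_j x_{ij}^{p\alpha_j}$; the general case should then follow by the standard reduction expressing an orbit sum as a product of generalized elementary symmetric functions modulo orbit sums of smaller (still $p$-divisible) monomials. Put $z_i:=\prod_j x_{ij}^{\alpha_j}$, so $p_{p\alpha}=\sum_i z_i^p=p_p(z_\bullet)$, the $p$-th power sum of $z_1,\dots,z_p$. Newton's identity at index $p$ reads $p_p(z_\bullet)=\sum_{\mu=1}^{p-1}(-1)^{\mu-1}e_\mu(z_\bullet)\,p_{p-\mu}(z_\bullet)+(-1)^{p-1}p\,e_p(z_\bullet)$; in characteristic~$p$ the last term vanishes, leaving
\[
p_{p\alpha}=\sum_{\mu=1}^{p-1}(-1)^{\mu-1}\,e_\mu(z_\bullet)\,p_{(p-\mu)\alpha},
\]
an expression in which the top elementary $e_p(z_\bullet)$ does not occur. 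Two further ingredients close the loop: first, $e_p(z_\bullet)=z_1\cdots z_p=\prod_j(\prod_i x_{ij})^{\alpha_j}$ is a product of column specializations of $e_p$ and hence lies in $P$ --- but it is exactly the term Newton's identity lets us avoid, which is what makes characteristic~$p$ special; second, for $\mu<p$ the integers $1,\dots,\mu$ are invertible modulo $p$, so Newton's identities express $e_\mu(z_\bullet)$ as a universal $\mathbb F_p$-polynomial in $p_1(z_\bullet),\dots,p_\mu(z_\bullet)$, and $p_d(z_\bullet)=p_{d\alpha}$. Feeding these back recursively reduces $p_{p\alpha}$ to single-column data, where everything visibly lands in $P$: if $\alpha$ is supported on one column $j$, then each $z_i=x_{ij}^{\alpha_j}$, so $e_\mu(z_\bullet)$ is an ordinary symmetric polynomial in $x_{1j},\dots,x_{pj}$, hence lies in $k[e_1(x_{1j},\dots,x_{pj}),\dots,e_p(x_{1j},\dots,x_{pj})]\subseteq P$, and likewise $p_{d\alpha}=\sum_i x_{ij}^{d\alpha_j}\in P$.

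\emph{Main obstacle.} The delicate point is making the recursion in Step~2 well founded. The right-hand side of the displayed identity carries terms of \emph{larger} total degree than $p_{p\alpha}$ itself (the factor $p_{(p-\mu)\alpha}$ has degree $(p-\mu)\lvert\alpha\rvert$ and is multiplied by $e_\mu(z_\bullet)$ of degree $\mu\lvert\alpha\rvert$), so one cannot simply induct on degree; and when $\alpha$ is spread over two or more columns the quantities $e_\mu(z_\bullet)$ are not polarizations in any evident way, so they are not manifestly in $P$. The real work is to choose the correct well-founded order on multi-indices, to reduce the multi-column case to the single-column one using the full supply of polarized elementary symmetric polynomials in $P$ together with the extra $p$-divisibility furnished by Frobenius, and to control the orbit-sum reduction in the general case of Step~2. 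This interaction of Frobenius twisting, Newton's identities, and the polarizations is the technical heart of the argument.
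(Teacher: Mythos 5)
Your Step 1 is fine as far as it goes, but the proof has a genuine gap, and it sits exactly where you locate it yourself: the recursion in Step 2 does not close up. Applying the classical Newton identity to $z_i=\prod_j x_{ij}^{\alpha_j}$ and then re-expressing $e_\mu(z_\bullet)$ ($\mu<p$) through the power sums $p_d(z_\bullet)=M_{d\alpha}$ only shows that $M_{p\alpha}$ is a polynomial in $M_{\alpha},M_{2\alpha},\dots,M_{(p-1)\alpha}$, and for a multi-column $\alpha$ these are in general \emph{not} in $P$: for instance with $p=3$ and $\alpha=(1,1,1,1)$, the element $M_{(1,1,1,1)}$ has degree $4>p$ and all exponents $<p$, so by Theorem~\ref{thm:rydh} it is a minimal generator of $\Gamma^p(S)$ not lying in $(\Gamma^p(S)_{>0})^2$, whereas every element of $P$ of degree $>p$ does lie there; so $M_{(1,1,1,1)}\notin P$ and the recursion terminates outside $P$. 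There is also a second, smaller gap in Step 1: reducing to orbit sums forces you to treat orbit sums of monomials spread over several rows, and the ``standard reduction'' you appeal to is not spelled out and is delicate in characteristic $p$ (this is precisely why Rydh's generating set contains the extra elements $\gamma^p(x_j)$). The paper avoids it by reducing to the algebra generating set of Theorem~\ref{thm:rydh}, so that only power sums $M_\alpha$ and the $\gamma^p(x_j)$ need to be handled.

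The two ideas your sketch is missing are the ones the paper uses to make the induction well founded. First, instead of Newton's identity in the composite variables $z_i$, it uses a weighted Newton identity in a \emph{single column} (Proposition~\ref{prop:newt} with weights $t_i$, specialized as in Corollary~\ref{cor:multinewt}): this yields $M_{\alpha}=\sum_{i=0}^{p-1}E_{p-i}(x_r)M_{\alpha-pe_r+ie_r}$, in which the elementary symmetric polynomials are one-column objects, hence visibly in $P$, and the power sums have a strictly smaller last exponent. Second, once the last exponent drops below $p$, the $\GL$-action enters: by the flattening statement (Corollary~\ref{cor:i<p}), $M_\alpha$ lies in the $\GL$-representation generated by $M_\beta$ where $\beta$ merges the last two entries, reducing the length; this is where the hypothesis ``$p\mid\alpha_i$ for all $i<\len(\alpha)$'' of Proposition~\ref{prop:main} (note: the last entry is exempt) is exactly what is needed, and it is preserved because the entries of $p\alpha$ are all divisible by $p$. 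Your proposal never invokes the $\GL$-action beyond noting $\GL$-stability of $\{f: f^p\in P\}$, and without some such device the passage from one column to several columns --- which you correctly flag as ``the real work'' --- is not accomplished.
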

Our proof does not logically rely on the works cited in the preceding paragraph. Theorem~\ref{thm:glalg} now easily follows because $R$ is graded $F$-split (Lemma~\ref{lem:Fsplmon}) but not finitely generated (Theorem~\ref{thm:rydh}).

\begin{remark}
Although we don't pursue it here, the geometry of $\Spec(P)$ is interesting in its own right, being the inverse limit of $Y_n \coloneqq \Chow_{0, p}(\bA^n \hookrightarrow \bP^n)$, the Chow variety parameterizing zero-dimensional cycles of degree $p$ for the natural embedding $\bA_k^{n} \to \bP_k^{n}$ (see \cite{rydh08phd} for a comprehensive introduction).
For $n \gg 0$, Neeman \cite{nee91cyc} proved $Y_n$ is not normal in char $p$, essentially because $P \ne R^{\fS_p}$.
\end{remark}

Despite the vast literature on multisymmetric polynomials from different viewpoints, a quantitative/uniformity result in the spirit of Theorem~\ref{thm:pthroot} appears to be new. 
We were motivated to investigate this upon 
realizing that an affirmative answer to the noetherian problem in positive characteristic would imply that $d(n)$, as defined above, cannot be bounded (see Remark~\ref{ss:rmks}(2)). 
This is quite ironic given that in characteristic zero, Weyl's theorem on polarizations \cite{weyl39} can be obtained as a consequence of noetherianity for certain $\GL$-algebras (see for example, \cite[Section~2]{sno13delta}). 
We conclude by raising a question suggested by Theorem~\ref{thm:pthroot} which would be a very compelling replacement for Weyl's theorem in the modular setting.
\begin{question}
Given a finite-dimensional representation $V$ of a finite group $G$, does there exist $d \in \bN$ such that $f^{p^d}$ lies in the subalgebra generated by polarizations of $k[V]^G$ for all $f \in k[V^{\oplus n}]^G$ and $n \in \bN$? 
\end{question}

\subsection*{Conventions} For $n \in \bN_{>0}$, we let $[n] = \{1, 2, \ldots, n\}$. All tensor products are over $k$ which is a field of characteristic $p > 0$. For an $\bN$-graded algebra $A$, the ideal generated by homogeneous elements of degree $>n$ is $A_{>n}$. The category of strict polynomial functors is denoted $\Pol_k$.

\subsection*{Acknowledgements} I am grateful to Suchitra Pande and Sridhar Venkatesh for helpful discussions on char $p$ methods and invariant theory, and to Steven Sam and Andrew Snowden for their encouragement.

\section{Multisymmetric polynomials}
\subsection{Shuffle product and divided powers}
The discussion in this section is modeled on \cite{ryd07gens} although our exposition is purposefully far less general. 
Let $K$ be a commutative ring and $S$ be a commutative $K$-algebra which is flat over $K$.
The graded $K$-module of \textbf{symmetric tensors} or \textbf{divided powers}
\[\Gamma(S) = \bigoplus_d \Gamma^d(S) = \bigoplus_d (S^{\otimes d})^{\fS_d}\]
can be endowed with the \textbf{shuffle product} $\times$:
given two symmetric tensors $x \in \Gamma^d(S)$ and $y \in \Gamma^e(S)$, we define
\[ x \times y = \sum_{\sigma \in \fS_{d, e}} \sigma (x \otimes y), \]
where $\fS_{d, e}$ is the set of all shuffle permutations, i.e., all $\sigma \in \fS_{d+e}$ such that $\sigma(1) < \sigma(2) < \ldots < \sigma(d)$ and $\sigma(d+1) < \sigma(d+2) <  \ldots < \sigma(d+e)$.
This endows $\Gamma(S)$ with the structure of a commutative $\bN$-graded $K$-algebra.

Fix $d \in \bN$. Each $\Gamma^d(S)$ has an internal multiplication induced from the usual $K$-algebra structure on $S^{\otimes d}$. We have the assignment $\gamma^d \colon S \to \Gamma^d(S)$ where $\gamma^d(s) = s^{\otimes d}$, which satisfies
\begin{itemize}
    \item $\gamma^d(\lambda s) = \lambda^d \gamma^d(s)$;
    \item $\gamma^d(s+t) = \sum_{d_1 + d_2 = d} \gamma^{d_1}(s) \times \gamma^{d_2}(t)$; and
    \item $\gamma^d(s) \times \gamma^e(s) = \binom{d+e}{e} \gamma^{d+e}(s)$;
\end{itemize}
for all $d, e \in \bN$, $\lambda \in K$ and $s, t \in S$.

\subsection{Multisymmetric polynomials}
We return to the case where $k$ is a field of $\chark(k) = p > 0$ as in the introduction.  Let $\bV$ be the $k$-vector space with basis $\{e_i \}_{i \in \bN_{>0}}$ and $\GL$ be the group of $k$-linear automorphisms of $\bV$.
Set $S = \Sym(\bV)$. We identify $S$ with the polynomial algebra $k[x_1, x_2, \ldots]$ and $R = S^{\otimes p}$ from the introduction with 
the polynomial algebra with variables being the entries of a generic $p\times \infty$ matrix, i.e., 
$ R = k
\left[ \begin{smallmatrix}
   x_{1,1} & x_{1,2} & \cdots & x_{1, n} & \cdots \\
   x_{2,1} & x_{2,2} & \cdots & x_{2, n} & \cdots \\
   \vdots & & \ddots & & \vdots \\
   x_{p,1} & x_{p,2} & \cdots & x_{p, n} & \cdots
\end{smallmatrix} \right]
$.

The ring of \textbf{multisymmetric polynomials} in $p \times \infty$ variables is the $k$-algebra $\Gamma^p(S)$ equipped with the internal product. 
Under the identification of $S^{\otimes p}$ with the polynomial ring above, the action of $\fS_p$ permutes the variables in each column, and $\Gamma^p(S)$ is the invariant subring for this action by definition.

\begin{example}
We shall use $p=3$ as a running example. The element $\gamma^1(x_1^3 x_2^2) \times \gamma^{2}(1)$ is $x_{1,1}^3 x_{1,2}^2 + x_{2,1}^3x_{2,2}^2 + x_{3,1}^3x_{3,2}^2$ and $\gamma^2(x_4) \times \gamma^1(1)$ is $x_{1,4}x_{2,4} + x_{1,4}x_{3,4} + x_{2,4}x_{3,4}$.
\end{example}
\subsection{Special elements} Let $\bN^{\infty}$ be the set of infinite tuples ${\alpha} = (\alpha_1, \alpha_2, \ldots)$ with each $\alpha_i \in \bN$ and $\alpha_r = 0$ for $r \gg 0$. We usually suppress the trailing zeros. 
For $i \in \bN_{>0}$, we denote by ${e}_i$ the tuple satisfying $({{e}_i})_j = \delta_{i, j}$ for all $j$.

The degree of $\alpha \in \bN^{\infty}$ is $|\alpha| \coloneqq \sum \alpha_i$. Denote by $x^{{\alpha}}$ the monomial $\prod x_i^{\alpha_i} \in S$. The \textbf{power sum} $M_{\alpha}$ is the element $\gamma^1(x^{\alpha}) \times \gamma^{p-1}(1) \in \Gamma^p(S)$. It is homogeneous of degree $|\alpha|$. 

The $i$-th \textbf{elementary symmetric polynomial} in the variable $x_j$ is $E_i(x_j) = \gamma^{i}(x_j) \times \gamma^{p-i}(1) \in \Gamma^p(S)$. Given $\alpha \in \bN^{\infty}$ with $|\alpha| \leq p$, the \textbf{elementary multisymmetric polynomial} $E_{\alpha} = (\times_{i=1}^{\infty} \gamma^{\alpha_i}(x_i)) \times \gamma^{p - |\alpha|} (1)$. Note $E_i(x_j) = E_{i e_j}$.

\subsection{Generators}
For a quick reference on the theory of $\GL$-algebras over $k$, we refer the reader to \cite[Section~2]{gan22ext}. Recall that every $\GL$-algebra has a canonical $\bN$-grading; the $\GL$-action preserves this grading. For the $\GL$-algebras $\Gamma^p(S) \subset S^{\otimes p}$, the canonical grading is the same as the natural grading where $\deg(x_{i, j}) = 1$ for all $i \in [p], j \in \bN_{>0}$. 

A $\GL$-algebra is \textbf{finitely generated} if it is generated as an ordinary $k$-algebra by the $\GL$-orbit of finitely many homogeneous elements. A finitely generated $\GL$-algebra is \textbf{noetherian} if every $\GL$-stable ideal is generated by the $\GL$-orbit of finitely many homogeneous elements.

We recall a minimal generating set for $\Gamma^p(S)$ from \cite[Corollary~8.5]{ryd07gens}.
\begin{theorem}\label{thm:rydh}
    The image of the elements in the set
    \[
   \{ \, M_{\alpha} \mid \alpha \in \bN^{\infty} \text{ with } \alpha_i < p \text{ for all } i \, \} \, \cup \, \{\,E_p(x_j) \mid j \in \bN_{>0} \, \}
    \]
    form a $k$-basis of $\Gamma^p(S)_{>0}/(\Gamma^p(S)_{>0})^2$.
\end{theorem}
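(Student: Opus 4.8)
The plan is to apply the graded Nakayama lemma. Since $\Gamma^p(S)$ is a connected $\bN$-graded $k$-algebra (its degree-$0$ part is $k$), the set $\mathcal{G} \coloneqq \{\, M_\alpha : \alpha \neq 0,\ \alpha_i < p \text{ for all } i \,\} \cup \{\, E_p(x_j) : j \in \bN_{>0} \,\}$ maps to a $k$-basis of $\Gamma^p(S)_{>0}/(\Gamma^p(S)_{>0})^2$ if and only if (i) $\mathcal{G}$ generates $\Gamma^p(S)$ as a $k$-algebra, and (ii) the image of $\mathcal{G}$ in $\Gamma^p(S)_{>0}/(\Gamma^p(S)_{>0})^2$ is $k$-linearly independent. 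I expect (ii) to be the real obstacle; (i) is a hands-on induction.

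For (i), I would use that $\Gamma^p(S) = (S^{\otimes p})^{\fS_p}$ has a $k$-basis consisting of the monomial orbit sums, indexed by multisets $\lambda$ of $p$ vectors in $\bN^\infty$ (the rows of an exponent matrix, some possibly zero); write $m_\lambda$ for such an orbit sum, of degree $|\lambda| \coloneqq \sum_i |\lambda^{(i)}|$. I would show $m_\lambda \in k[\mathcal{G}]$ by induction on $|\lambda|$ and, within a fixed degree, on the number $r(\lambda)$ of nonzero parts of $\lambda$ counted with multiplicity. If $\lambda$ consists of a single vector $\mu$ taken with multiplicity $p$, then $m_\lambda = \prod_j E_p(x_j)^{\mu_j} \in k[\mathcal{G}]$. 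Otherwise write $\lambda = \{\pi_1^{n_1}, \dots, \pi_k^{n_k}\}$ with the $\pi_l$ distinct and nonzero; each $n_l$ is then $< p$, so $\prod_l n_l!$ is a unit, and expanding $\prod_l M_{\pi_l}^{n_l}$ in the orbit-sum basis yields $(\prod_l n_l!)\,m_\lambda$ plus orbit sums of strictly smaller $r$. When $r(\lambda) \geq 2$ each $M_{\pi_l}$ has degree $< |\lambda|$, so by induction the product and all of the lower-$r$ terms lie in $k[\mathcal{G}]$, hence so does $m_\lambda$; when $r(\lambda) = 1$ the orbit sum is just some $M_\pi$. For these remaining one-part orbit sums $M_\pi$ with some $\pi_c \geq p$, I would use the weighted Newton identity: the $p$ variables $x_{1,c}, \dots, x_{p,c}$ satisfy their common characteristic equation $x_{r,c}^{\,p} = \sum_{i=1}^p (-1)^{i-1} E_i(x_c)\, x_{r,c}^{\,p-i}$, and multiplying by $x_{r,c}^{\,\pi_c - p} \prod_{c' \neq c} x_{r,c'}^{\,\pi_{c'}}$ and summing over $r$ gives
\[
M_\pi \;=\; \sum_{i=1}^{p} (-1)^{i-1}\, E_i(x_c)\, M_{\pi - i e_c}.
\]
Each $M_{\pi - i e_c}$ has smaller degree, hence lies in $k[\mathcal{G}]$ by induction; $E_p(x_c) \in \mathcal{G}$; and for $i < p$ the usual Newton identities express $E_i(x_c)$ as a polynomial in $M_{e_c}, M_{2 e_c}, \dots, M_{i e_c} \in \mathcal{G}$, involving only divisions by the units $1, \dots, i$. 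This proves (i).

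For (ii), the elements $E_p(x_j)$ are easy to peel off: the coefficient of the monomial $x_{1,j} x_{2,j} \cdots x_{p,j}$ is a $k$-linear functional on $\Gamma^p(S)$ taking the value $1$ on $E_p(x_j)$, $0$ on every other $E_p(x_{j'})$ and on every $M_\alpha$, and $0$ on each product $fg$ of homogeneous multisymmetric polynomials of positive degree with $\deg f + \deg g = p$ — indeed $\fS_p$-invariance forces that coefficient in $fg$ to be $\binom{p}{\deg f}$ times a product of two coefficients, and $\binom{p}{a} \equiv 0 \pmod p$ for $0 < a < p$. The genuine difficulty is to show that no nonzero $k$-linear combination of the $M_\alpha$ with all $\alpha_i < p$ (in a fixed degree) lies in $(\Gamma^p(S)_{>0})^2$. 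Note that the naive ``leading term'' functional $f \mapsto [\prod_i x_{1,i}^{\alpha_i}]\, f$ fails here, being nonzero on the product $M_\beta M_{\alpha - \beta}$. I would instead try a dimension count in each degree $d$: $\dim_k \Gamma^p(S)_d$ is the characteristic-independent number of degree-$d$ monomial orbits, so the task reduces to computing $\dim_k \bigl((\Gamma^p(S)_{>0})^2\bigr)_d$, i.e.\ to describing the relations among the generators in $\mathcal{G}$.

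This last point is the main obstacle, and it is genuinely sensitive to the characteristic. In characteristic zero it is effortless — Weyl's theorem makes $\Gamma^p(S) \otimes \bQ$ generated by the $M_\alpha$ with $|\alpha| \leq p$, which pins down the decomposables — but in characteristic $p$ the decomposable subspace is strictly smaller, its combinatorics are delicate, and (for instance) the span of the $E_p(x_j)$ does not even stay $\GL$-stable modulo decomposables. This is precisely the analysis carried out in \cite[\S8]{ryd07gens}, whose Corollary~8.5 we invoke.
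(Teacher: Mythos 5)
The paper does not actually prove this statement: it imports it wholesale from Rydh (\cite[Corollary~8.5]{ryd07gens}), so there is no internal proof to measure you against. Judged on its own terms, your proposal is correct and complete exactly where it gives arguments. The generation half is a genuine, self-contained proof: the induction on $(|\lambda|, r(\lambda))$ over monomial orbit sums is sound, the identity $\prod_l M_{\pi_l}^{n_l} = (\prod_l n_l!)\, m_\lambda + (\text{lower } r)$ with $\prod_l n_l!$ a unit is right (the only multiset with a part of multiplicity $p$ is the all-rows-equal case, which you handle separately as $\prod_j E_p(x_j)^{\mu_j}$), and the reduction of $M_\pi$ with $\pi_c \ge p$ via the weighted Newton identity, together with the classical Newton identities to express $E_i(x_c)$, $i<p$, in the $M_{je_c}$, is the same mechanism the paper itself uses later (Proposition~\ref{prop:newt} and Corollary~\ref{cor:multinewt}). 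Your functional argument peeling off the $E_p(x_j)$ — the coefficient of $x_{1,j}\cdots x_{p,j}$ vanishes on degree-$p$ decomposables because $\fS_p$-invariance forces it to be $\binom{p}{a}$ times a product of coefficients — is also correct and rather slick.

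The gap, which you name honestly, is the heart of the statement: that no nonzero combination of the $M_\alpha$ with all $\alpha_i < p$ lies in $(\Gamma^p(S)_{>0})^2$ (equivalently, that your generating set is \emph{minimal} away from the $E_p(x_j)$). This is precisely the characteristic-$p$-sensitive computation of the decomposables carried out in \cite[Section~8]{ryd07gens}, and your proposal ultimately defers to the same Corollary~8.5 the paper cites. So as a standalone proof the proposal is incomplete, though no more so than the paper's own treatment, and the portions you do prove are correct. If you wanted to close the gap yourself, the missing ingredient is a supply of linear functionals (or a filtration of $\Gamma^p(S)_{>0}/(\Gamma^p(S)_{>0})^2$) that kill products of positive-degree invariants but separate the $M_\alpha$; your own observation that the naive leading-coefficient functional fails shows this requires real work, not a routine dimension count.
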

So $\Gamma^p(S)$ is not finitely generated as a $\GL$-algebra since 
$\Gamma^p(S)_{>0}/(\Gamma^p(S)_{>0})^2$ is nonzero in every positive degree. We now define the main object of interest, evidently a finitely generated $\GL$-algebra.
\begin{defn}
The \textbf{algebra of polarizations} $P$ is the $\GL$-subalgebra of $\Gamma^p(S)$ generated by the elements $E_i(x_1)$ with $i \in [p]$.
\end{defn}
Although we won't use this, it is not hard to see $P$ is also the algebra generated by all elementary multisymmetric polynomials, or all power sums $M_{\alpha}$ with $|\alpha| \leq p$.

\begin{example}\label{exmp1} We return to our $p=3$ running example.
Using the action of $\fS_{\infty} \subset \GL$, we get that $P$ contains $E_j(x_i)$ for all $j \in [3]$ and $i \in \bN_{>0}$. 
In particular, any multisymmetric polynomial in which only one ``column" of variables appears lies in $P$. 
This includes $M_{re_s} = x_{1,s}^r + x_{2,s}^r + x_{3,s}^r$ for all $r,s \in \bN_{>0}$.
\end{example}

\section{The main results}
\subsection{Newton's identities}
We first prove a mild variant of Newton's identities in this subsection.
In $\bZ[x_1, x_2, \ldots, x_p, t_1, t_2, \ldots t_p]$, consider the power sums with the variables $t_i$ attached, i.e.,
$\widetilde{M}_{r} = t_1 x_1^r + t_2 x_2^r + \ldots + t_p x_p^r$, and the usual elementary symmetric polynomials in the $x$ variables, i.e., $E_r(x) = \sum_{j_1 < j_2 < \ldots < j_r} x_{j_1} x_{j_2} \cdots x_{j_r}$.

\begin{proposition}\label{prop:newt} We have $\widetilde{M}_{p} = \sum_{i=0}^{p-1} (-1)^i E_{p-i}(x) \widetilde{M}_{i}.$
\end{proposition}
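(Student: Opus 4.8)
The plan is to recognize Proposition~\ref{prop:newt} as a ``$t$-weighted'' repackaging of the classical $p$-th Newton identity, proved by the oldest trick available: substituting the $x_j$ into the polynomial whose roots they are. Concretely, work in $\bZ[x_1,\dots,x_p][T]$ with
\[
g(T) \;=\; \prod_{j=1}^{p}(T-x_j) \;=\; \sum_{i=0}^{p}(-1)^i E_i(x)\,T^{\,p-i},
\]
the second equality being the definition of the $E_i(x)$. Each $x_j$ is a root of $g$, so plugging $T=x_j$ into $g(T)=0$ and isolating the leading term gives, for every $j\in[p]$,
\[
x_j^{\,p} \;=\; \sum_{i=1}^{p}(-1)^{i-1}E_i(x)\,x_j^{\,p-i}.
\]

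Next I would multiply the $j$-th of these equations by $t_j$ and sum over $j\in[p]$. The left-hand side collapses to $\sum_{j}t_j x_j^{\,p}=\widetilde M_p$, and for each fixed $i$ the inner sum $\sum_j t_j x_j^{\,p-i}$ is exactly $\widetilde M_{p-i}$ (with $\widetilde M_0 = t_1+\cdots+t_p$). This yields $\widetilde M_p=\sum_{i=1}^{p}(-1)^{i-1}E_i(x)\,\widetilde M_{p-i}$, which becomes the displayed identity after the reindexing $i\mapsto p-i$. As an alternative route one can run the same argument through generating functions: $\bigl(\sum_{r\ge 0}\widetilde M_r T^r\bigr)\bigl(\sum_{i\ge 0}(-1)^iE_i(x)T^i\bigr)=\sum_j t_j\prod_{k\ne j}(1-x_kT)$ has degree $\le p-1$ in $T$, so the coefficient of $T^p$ on the left vanishes; this is the same relation.

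I do not expect a genuine obstacle here: the statement is elementary and the work is confined to sign/index bookkeeping in the reindexing step (and, if one wants the identity literally over $\bZ$ rather than merely after reduction mod $p$, to the parity of $p$). The role of this lemma is purely preparatory — the auxiliary variables $t_j$ are carried along so that, in the next stage, multiplication by $t_j$ and the substitutions $x_i\mapsto(\text{monomials})$ can be matched up with the divided-power structure on $\Gamma^p(S)$, turning the weighted Newton relation into an assertion about power sums $\widetilde M_r$ and elementary multisymmetric polynomials inside $P$.
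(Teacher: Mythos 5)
Your proposal is correct and follows essentially the same route as the paper: substitute $T=x_j$ into $\prod_j(T-x_j)=\sum_i(-1)^{p-i}E_{p-i}(x)T^i$, weight by $t_j$, sum over $j$, and rearrange. Your parenthetical remark about the parity of $p$ is a fair observation (for $p=2$ the identity over $\bZ$ holds only up to sign, which is harmless after reduction mod $p$), but otherwise there is nothing to add.
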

\begin{proof}
  Let $T$ be a formal variable and consider the equation 
  \[\prod_{i \in [p]} (T - x_i) = \sum_{i=0}^p (-1)^{p-i} E_{p-i}(x) T^i.\] 
  By substituting $T = x_j$ for $j \in [p]$ and multiplying by $t_j$, we get the equation
   \[ 0 = \sum_{i=0}^p (-1)^{p-i} E_{p-i}(x) (x_j^i t_j).\]
   Summing over all $j \in [p]$ and rearranging, we obtain the requisite identity.
\end{proof}
We will require the following application of the above result.
\begin{corollary}\label{cor:multinewt}
   Given a tuple $\alpha \in \bN^{\infty}$ with $\alpha_r \geq p$ , we have
  \[ M_{\alpha} =  \sum_{i=0}^{p-1} E_{p - i} (x_r) M_{\alpha - p e_r + i e_r}. \]
\end{corollary}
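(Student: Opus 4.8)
The plan is to obtain the corollary by substituting suitable monomials of $R$ for the variables $x_j$ and $t_j$ in the identity of Proposition~\ref{prop:newt}. Fix $\alpha$ and $r$ with $\alpha_r \ge p$. I would use the $k$-algebra homomorphism $\phi \colon k[x_1,\dots,x_p,t_1,\dots,t_p] \to R$ sending $x_j \mapsto x_{j,r}$ and $t_j \mapsto \bigl(\prod_{\ell \ne r} x_{j,\ell}^{\alpha_\ell}\bigr)\, x_{j,r}^{\,\alpha_r - p}$ for $j \in [p]$. The hypothesis $\alpha_r \ge p$ is used precisely here: it makes $\alpha_r - p \ge 0$, so each $t_j$ is sent to an honest monomial and $\phi$ is well defined. (One first reduces the identity of Proposition~\ref{prop:newt} modulo $p$, which is harmless since it holds over $\bZ$.)

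Next I would identify the images of the terms appearing in Proposition~\ref{prop:newt}. For $0 \le s \le p$,
\[ \phi(\widetilde{M}_s) \;=\; \sum_{j=1}^{p} \Bigl(\prod_{\ell \ne r} x_{j,\ell}^{\alpha_\ell}\Bigr)\, x_{j,r}^{\,\alpha_r - p + s}, \]
and under the identification $\Gamma^p(S) = (S^{\otimes p})^{\fS_p} \subset R$ this is exactly $M_{\alpha - p e_r + s e_r}$ (the exponent $\alpha_r - p + s$ stays $\ge 0$ throughout the range); in particular $\phi(\widetilde{M}_p) = M_\alpha$. Similarly, a direct expansion of the shuffle product $\gamma^s(x_r) \times \gamma^{p-s}(1)$ shows $E_s(x_r) = \sum_{j_1 < \dots < j_s} x_{j_1,r}\cdots x_{j_s,r}$, which is exactly $\phi(E_s(x))$. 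The only point here that is more than unwinding definitions is that $\phi$, a homomorphism of ordinary polynomial rings, is compatible with the products in Proposition~\ref{prop:newt}: this holds because the internal product on $\Gamma^p(S)$ is by definition the restriction of the componentwise product on $R = S^{\otimes p}$, which is the product $\phi$ respects.

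Applying $\phi$ to $\widetilde{M}_p = \sum_{i=0}^{p-1}(-1)^i E_{p-i}(x)\,\widetilde{M}_i$ then yields $M_\alpha = \sum_{i=0}^{p-1}(-1)^i E_{p-i}(x_r)\, M_{\alpha - p e_r + i e_r}$, which is the asserted identity. I expect no genuine obstacle: the content is entirely in choosing the substitution so that $\widetilde{M}_s$ specializes to the power sum $M_{\alpha - p e_r + s e_r}$, and the two things to be careful about are (a) the well-definedness of $\phi$, which is exactly the role of the hypothesis $\alpha_r \ge p$, and (b) matching $\phi(\widetilde{M}_s)$ and $\phi(E_s(x))$ with the intrinsically defined elements $M_{\alpha - p e_r + s e_r}$ and $E_s(x_r)$ of $\Gamma^p(S)$.
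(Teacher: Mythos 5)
Your proposal is correct and follows essentially the same route as the paper: the paper's proof also defines the substitution homomorphism sending $x_j \mapsto x_{j,r}$ and $t_j \mapsto x_j^{\alpha - p e_r}$ (which is exactly your choice of monomials) and applies it to the identity of Proposition~\ref{prop:newt}, with your extra checks of well-definedness and of the images of $\widetilde{M}_s$ and $E_s(x)$ just making explicit what the paper leaves implicit. The only discrepancy is the sign $(-1)^i$, which your derivation (like the paper's own proof) produces but the corollary as stated omits; this is immaterial for the paper's use of the corollary, which only needs membership of $M_\alpha$ in the subalgebra generated by the listed elements.
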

\begin{proof}
For a tuple $\beta \in \bN^{\infty}$ and $i \in [p]$, we write $x_i^{\beta}$ for the element $\prod x_{i, r}^{\beta_r} \in S^{\otimes p}$.
Define the algebra map $\phi \colon Q \to S^{\otimes p}$ by mapping $x_i \mapsto x_{i, r} \text{ and } t_i \mapsto  x_i^{\alpha - p e_r}$ for all $i \in [p]$; the element $E_i(x)$ is mapped to $E_i(x_r)$ under $\phi$, and similarly $\widetilde{M_i} \in Q$ to $M_{\alpha - p e_r + i e_r}$. The required identity is $\phi$ applied to the identity from Proposition~\ref{prop:newt}.
\end{proof}

\begin{example}\label{exmp2} When $p=3$, the above result implies that $M_{(3,3)} = x_{1,1}^3 x_{1,2}^3 + x_{2,1}^3 x_{2,2}^3 + x_{3,1}^3 x_{3,2}^3$ is in the $k$-algebra generated by the elements $M_{(3,2)}, M_{(3,1)}$ and the elementary symmetric polynomials (the above identity can easily be checked by hand in this case).
\end{example}

\subsection{Flattening} In this subsection, we study the action of $\GL$ on the power sums.
\begin{lemma}\label{lem:symr}
The $k$-linear map $\Sym^r(\bV) \to \Gamma^p(S)_r$ where $f \mapsto \gamma^{1}(f) \times \gamma^{p-1}(1)$ is an injective map of $\GL$-representations.
\end{lemma}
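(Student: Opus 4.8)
The plan is to exhibit the map as a morphism of $\GL$-representations first, then prove injectivity by a direct computation on monomials. For $\GL$-equivariance: the divided power operations $\gamma^d$ and the shuffle product $\times$ are functorial in $S$, hence commute with any $k$-algebra endomorphism of $S$ induced by a linear map $\bV \to \bV$; since such maps are precisely (the functorial extension of) the $\GL$-action, the map $f \mapsto \gamma^1(f) \times \gamma^{p-1}(1)$ intertwines the $\GL$-actions on $\Sym^r(\bV)$ and on $\Gamma^p(S)_r = (S^{\otimes p})^{\fS_p}_r$. One should note that $\gamma^{p-1}(1)$ is a $\GL$-fixed element (it only involves the unit of $S$), so left-multiplying by it in the shuffle product is $\GL$-equivariant; this is the only subtle point in the equivariance claim and deserves one sentence.

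For injectivity, the first step is to unwind the formula. Writing $f = \sum_{|\alpha| = r} c_\alpha x^\alpha \in \Sym^r(\bV)$ with $c_\alpha \in k$, linearity of $\gamma^1$ and distributivity of $\times$ give $\gamma^1(f) \times \gamma^{p-1}(1) = \sum_\alpha c_\alpha M_\alpha$ in the notation of the paper. So I must show the power sums $\{M_\alpha : |\alpha| = r\}$ are $k$-linearly independent in $\Gamma^p(S)_r \subset R = S^{\otimes p}$. Concretely $M_\alpha = \gamma^1(x^\alpha) \times \gamma^{p-1}(1) = \sum_{j=1}^{p} x_j^\alpha$ where $x_j^\alpha = \prod_i x_{j,i}^{\alpha_i}$ — this is exactly the expansion used in the proof of Corollary \ref{cor:multinewt}. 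The key observation is that the monomial $x_1^\alpha$ (the term coming from the first ``row'') appears in $M_\alpha$ but in no other $M_\beta$ with $\beta \ne \alpha$: indeed any monomial occurring in some $M_\beta$ has the form $x_j^\beta$ for a single row index $j$, and $x_1^\beta = x_1^\alpha$ forces $\beta = \alpha$. Hence if $\sum_\alpha c_\alpha M_\alpha = 0$, comparing the coefficient of $x_1^\alpha$ gives $c_\alpha = 0$ for each $\alpha$. This proves the displayed map is injective.

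I do not expect a serious obstacle here; the lemma is essentially bookkeeping. The one place to be careful is to make sure the argument is characteristic-free — and it is, since the coefficient extraction argument never divides by anything. (An alternative phrasing of injectivity: the map $\bV \hookrightarrow S^{\otimes p}$, $v \mapsto v \otimes 1 \otimes \cdots \otimes 1 + \cdots + 1 \otimes \cdots \otimes v$ is the "diagonal-type" inclusion, and $\gamma^1(-) \times \gamma^{p-1}(1)$ on $\Sym^r$ is the induced map on the $r$-th symmetric power composed with the product; but the monomial argument above is cleaner and I would just use that.) Finally I would remark that the image consists of the span of the $M_\alpha$ with $|\alpha| = r$, which is the "flattening" referred to in the subsection title, setting up its use in what follows.
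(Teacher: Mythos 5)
Your proof is correct and follows essentially the same route as the paper: both rest on unwinding $\gamma^1(f)\times\gamma^{p-1}(1)$ as the sum of the row-substitutions $\sum_{j=1}^p f(x_{j,1},\ldots,x_{j,n})$, from which equivariance and injectivity are immediate. You are in fact more explicit than the paper on the injectivity step (linear independence of the $M_\alpha$ via the coefficient of the first-row monomial), which is a fine addition but not a different method.
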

\begin{proof}
Let $g \in \GL$. Assume $f \in \Sym^r(\bV)$ is that uses the variables $x_1, x_2, \ldots, x_n$. Using the definition of the shuffle product, we have $\gamma^1(f) \times \gamma^{p-1}(1) = f(x_{1,1}, x_{1,2}, \ldots, x_{1,n}) + f(x_{2,1}, x_{2,2}, \ldots, x_{2, n}) + \ldots + f(x_{p,1}, x_{p,2}, \ldots, x_{p, n})$. From this observation, it is easy to now see that $\gamma^1(g(f)) \times \gamma^{p-1}(1) = g (\gamma^1(f) \times \gamma^{p-1}(1))$.
\end{proof}

The next corollary requires $\chark(k) = p$ assumption.
\begin{corollary} \label{cor:i<p}
Let $\alpha \in \bN^{\infty}$ be a tuple with $\alpha_n = 0$, $j \in \bN$, and $i \in [p-1]$. The $\GL$-subrepresentation of $\Gamma^p(A)$ generated by $M_{\alpha + (jp+i)e_n}$ contains $M_{\alpha + jpe_n + i e_{n+1}}$.
\end{corollary}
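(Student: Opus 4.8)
The plan is to transfer the statement from $\Gamma^p(S)$ down to the symmetric power $\Sym^r(\bV)$ via Lemma~\ref{lem:symr}, where it becomes a one-line computation with a single transvection and one binomial coefficient.

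Set $r = |\alpha| + jp + i$, and let $\psi\colon \Sym^r(\bV)\hookrightarrow \Gamma^p(S)_r$, $f\mapsto \gamma^1(f)\times\gamma^{p-1}(1)$, be the injective map of $\GL$-representations from Lemma~\ref{lem:symr}; by construction $\psi(x^\beta)=M_\beta$ for every $\beta$ with $|\beta|=r$. Since $\psi$ is linear and $\GL$-equivariant, the $\GL$-subrepresentation of $\Gamma^p(S)$ generated by $M_{\alpha+(jp+i)e_n}=\psi(x^\alpha x_n^{jp+i})$ is the $\psi$-image of the $\GL$-subrepresentation $W\subseteq\Sym^r(\bV)$ generated by $\mu\coloneqq x^\alpha x_n^{jp+i}$; as $\psi$ is injective it therefore suffices to prove $\nu\coloneqq x^\alpha x_n^{jp}x_{n+1}^{i}\in W$.

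Next I would apply the transvection $g\in\GL$ with $e_n\mapsto e_n+e_{n+1}$ fixing all other basis vectors, so that $x_n\mapsto x_n+x_{n+1}$ and every other coordinate is fixed. Since $\alpha_n=0$,
\[
g\mu \;=\; x^\alpha(x_n+x_{n+1})^{jp+i} \;=\; \sum_{b=0}^{jp+i}\binom{jp+i}{b}\,x^\alpha x_n^{jp+i-b}x_{n+1}^{b}\ \in\ W.
\]
Now decompose $g\mu$ by degree in $x_{n+1}$. A $\GL$-subrepresentation is stable under the one-parameter subgroup rescaling $e_{n+1}$, hence is a graded subspace for the $x_{n+1}$-grading, so every $x_{n+1}$-graded component of $g\mu$ again lies in $W$. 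Each monomial occurring in $g\mu$ agrees with $x^\alpha$ outside the slots $n$ and $n+1$, and the sum of its exponents in those two slots is independent of the term, so the monomial is determined by its $x_{n+1}$-degree; hence the $x_{n+1}$-graded piece of $g\mu$ of the same $x_{n+1}$-degree as $\nu$ is precisely the term $b=i$, namely $\binom{jp+i}{i}\,\nu$. Finally $\binom{jp+i}{i}\equiv 1\pmod p$ by Lucas' theorem, since the base-$p$ expansion of $jp+i$ has units digit $i$ (this is where $0<i<p$ is used). Therefore $\nu\in W$, and so $M_{\alpha+jp e_n+i e_{n+1}}$ lies in the $\GL$-subrepresentation generated by $M_{\alpha+(jp+i)e_n}$.

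I do not expect a serious obstacle once one thinks to pass to $\Sym^r(\bV)$; the two things to get right are (i) that an injective $\GL$-equivariant map lets one compute the generated subrepresentation after applying $\psi$, and (ii) that extracting a single graded piece is a $\GL$-stable operation — it is induced by a torus — which is what makes it legitimate to isolate the monomial $\nu$ inside $g\mu$. Granting these, the rest is formal or a one-line binomial identity, and the hypothesis $\chark(k)=p$ enters only through the Lucas congruence $\binom{jp+i}{i}\equiv 1$, which makes the coefficient of the isolated monomial a unit; without it one would merely recover a nonzero scalar multiple of $\nu$. (One could instead expand $(x_n+x_{n+1})^{jp+i}=(x_n^p+x_{n+1}^p)^j(x_n+x_{n+1})^i$ using $\chark(k)=p$ to see directly that the monomials of $g\mu$ have pairwise distinct $x_{n+1}$-degrees, but this refinement is not needed.)
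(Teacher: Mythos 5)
Your proof is correct, and its first half is exactly the paper's reduction: both pass through Lemma~\ref{lem:symr} to replace $M_\beta$ by the monomial $x^\beta$ in $\Sym^r(\bV)$. Where you diverge is in how you move the $i$ extra powers of $x_n$ onto $x_{n+1}$: the paper applies the divided-power operator $\tfrac{1}{i!}(E_{n,n+1})^i \in U(\fg\fl_N)$ (legitimate since $i<p$) to the monomial, using that the $\GL_N$-subrepresentation is stable under the induced Lie-algebra action, whereas you stay entirely inside the group, hitting the monomial with the transvection $e_n\mapsto e_n+e_{n+1}$ and then isolating the correct $x_{n+1}$-weight component via the torus rescaling $e_{n+1}$. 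The two are morally the same computation --- your extracted term $\binom{jp+i}{i}\,\nu$ is precisely what the divided-power operator produces, and in both arguments the hypothesis $0<i<p$ enters through the nonvanishing of this coefficient mod $p$ (your Lucas step makes explicit what the paper leaves implicit). What your route buys is that it avoids justifying hyperalgebra/Lie-algebra stability of a group subrepresentation; what it costs is the weight-extraction step, which needs enough scalars (a Vandermonde argument, so $k$ infinite or the functorial framework the paper works in) --- but this is no worse than the paper's own implicit use of Zariski density to pass from the $\GL_N$-subrepresentation to its $\fg\fl_N$-stability, and is harmless here since the main theorems are anyway reduced to $\overline{k}$. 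One small remark: injectivity of $\psi$ is not actually needed for your reduction; $\GL$-equivariance alone gives that the span of the orbit of $\psi(\mu)$ is $\psi(W)$, so $\nu\in W$ already yields the claim.
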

\begin{proof}
By the previous lemma, it suffices to show that the $\GL$-representation generated by $x^{\alpha + (jp+i)e_n}$ in $\Sym(\bV)$ contains $x^{\alpha + jpe_n + ie_{n+1}}$.
Assume $\alpha_i = 0$ for all $i > N \gg n$ and $r = |\alpha| + jp + i$. The map $\GL_N \to \GL(\Sym^r(k^N))$ induces a map on Lie algebras, which sends the element $E_{a, b} \in \fg\fl_N$ to the element $x_b \frac{\partial}{\partial x_a}$ for all $a, b \in [N]$. Applying $\frac{1}{i!}(E_{n, n+1})^i \in U(\fg\fl_n)$ to $x^{\alpha + (jp+i)e_n}$, we get that $x^{\alpha + (jp)e_n + ie_{n+1}}$ lies in the $\fg\fl_N$-subrepresentation generated by $x^{\alpha + (jp+i)e_n}$, which in turn is contained in the $\GL_N$-subrepresentation it generates.
\end{proof}

\begin{example}\label{exmp3}
When $p=3$, the previous result shows that $M_{(3,2)}$ is in the $\GL$-representation generated by $M_{(5)}$ (the tuple $(5)$ can be ``flattened" to $(3, 2)$). We also have $M_{(3,1)} \in \GL \langle M_{(4)} \rangle $.
\end{example}

\subsection{Boundedness of \texorpdfstring{$p$}{p}-th rootedness} We prove Theorem~\ref{thm:pthroot} in this subsection.
For a tuple $\alpha \in \bN^{\infty}$, the \textbf{length} of $\alpha$ is 
\[\len(\alpha) \coloneqq \min \{i | \alpha_j = 0 \text{ for all } j > i \}.\] 
Our main technical result is:
\begin{proposition}\label{prop:main}
Assume $\alpha \in \bN^{\infty}$ satisfies $p \divides \alpha_i$ for all $i < \len(\alpha)$. The element $M_{\alpha} \in P$.
\end{proposition}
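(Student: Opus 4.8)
The plan is to prove Proposition~\ref{prop:main} by a nested induction: an outer induction on $\ell \coloneqq \len(\alpha)$, and, for each fixed $\ell \geq 2$, an inner induction on the last nonzero entry $\alpha_\ell \geq 1$. The base case $\ell \leq 1$ is immediate, since there $M_\alpha$ is either $0$ or a power sum $M_{\alpha_1 e_1}$ involving only the first column of variables, and such elements lie in $P$ by Example~\ref{exmp1}.

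For the inductive step, fix $\alpha$ with $\len(\alpha) = \ell \geq 2$ and $p \divides \alpha_i$ for all $i < \ell$, and set $m \coloneqq \alpha_\ell$. I would distinguish two cases. If $m \geq p$, Corollary~\ref{cor:multinewt} with $r = \ell$ gives $M_\alpha = \sum_{i=0}^{p-1} E_{p-i}(x_\ell)\, M_{\alpha - (p-i)e_\ell}$; each coefficient $E_{p-i}(x_\ell)$ lies in $P$ by Example~\ref{exmp1}, and each tuple $\alpha - (p-i)e_\ell$ agrees with $\alpha$ in positions $1, \dots, \ell-1$ (so it still satisfies the divisibility hypothesis) and has $\ell$-th entry in $\{m-p, \dots, m-1\}$, strictly smaller than $m$ and possibly $0$. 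Such tuples are handled by the inner induction when their length is still $\ell$, and by the outer induction when the $\ell$-th entry has dropped to $0$; hence $M_\alpha \in P$. If instead $1 \leq m \leq p-1$, I would apply the flattening result: writing $\alpha_{\ell-1} = jp$ (possible as $p \divides \alpha_{\ell-1}$) and $\gamma \coloneqq \alpha - \alpha_{\ell-1}e_{\ell-1} - m e_\ell$, one has $\gamma_{\ell-1} = 0$, so Corollary~\ref{cor:i<p} with $n = \ell-1$ and $i = m \in [p-1]$ shows that the $\GL$-subrepresentation generated by $M_\delta$, where $\delta \coloneqq (\alpha_1, \dots, \alpha_{\ell-2}, \alpha_{\ell-1}+m)$, contains $M_\alpha$. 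Since $\delta$ has length $\ell-1$ and still satisfies the divisibility hypothesis, $M_\delta \in P$ by the outer induction, and $M_\alpha \in P$ because $P$ is $\GL$-stable.

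The main point — more a matter of finding the right formulation than a technical obstacle, given that Corollaries~\ref{cor:multinewt} and~\ref{cor:i<p} are already available — is to choose the induction so that it closes. The delicate choice is the exact hypothesis: it must require $p \divides \alpha_i$ only for $i < \len(\alpha)$, because the flattening step merges the entries in positions $\ell-1$ and $\ell$ into the single entry $\alpha_{\ell-1} + m$, which is typically not divisible by $p$; a hypothesis demanding divisibility of every entry would not survive the recursion. The rest is bookkeeping: checking that the auxiliary tuples produced by the two corollaries still meet the (correctly weakened) hypothesis and that the governing parameter — length, or last entry with length fixed — strictly decreases. Theorem~\ref{thm:pthroot} then follows quickly: by Theorem~\ref{thm:rydh} each $f \in \Gamma^p(S)$ is a polynomial in the generators $M_\beta$ (with all $\beta_i < p$) and $E_p(x_j)$, so in characteristic $p$ its $p$-th power $f^p$ is a polynomial in $M_\beta^p = M_{p\beta}$ and $E_p(x_j)^p$, which lie in $P$ by Proposition~\ref{prop:main} applied to $p\beta$ and because $E_p(x_j) \in P$.
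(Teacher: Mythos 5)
Your proof is correct and follows essentially the same route as the paper: a nested induction on $\len(\alpha)$ and on the last entry, using Corollary~\ref{cor:i<p} (flattening, reducing to the merged tuple handled by the outer induction) when $\alpha_{\ell} < p$ and Corollary~\ref{cor:multinewt} (Newton's identity) when $\alpha_{\ell} \geq p$. If anything, you are slightly more explicit than the paper about the corner case where the last entry drops to zero and about verifying the hypotheses of Corollary~\ref{cor:i<p}.
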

\begin{proof}
We proceed by (outer) induction on $r \coloneqq \len(\alpha)$ with the $r=1$ case being the fact that the elementary symmetric polynomials $E_1(x_1), E_2(x_1), \ldots E_p(x_1)$ generate the ring of symmetric polynomials in $p$ variables. Now assume $ r > 1$. 
We now induct on $\alpha_r$. First, assume $\alpha_r < p$. 
The tuple $\beta = (\alpha_1, \alpha_2, \ldots, \alpha_{r-2}, \alpha_{r-1} + \alpha_r)$ is such that $p \divides \beta_i = \alpha_i$ for all $i < \len(\beta) = r-1$. So by the (outer) induction hypothesis, the element $M_{\beta} \in P$. 
By Corollary~\ref{cor:i<p}, the element $M_{\alpha} \in \GL\langle M_{\beta} \rangle$ and therefore also lies in $P$ as $P$ is closed under the $\GL$-action.
Now, assume $\alpha_r \geq p$. By Corollary~\ref{cor:multinewt}, the element
$M_{\alpha}$ 
lies in the $k$-subalgebra generated by $M_{\alpha - pe_r + i e_r}$ with $i < p$ and the elementary symmetric polynomials in the variable $x_r$. The elements $M_{\alpha - pe_r + ie_r}$ for $i < p$ lie in $P$ by the inner induction hypothesis, and all elementary symmetric polynomials lie in $P$ by definition. So $M_{\alpha} \in P$ since $P$ is a subalgebra. 
\end{proof}

\begin{example}
We demonstrate the iterative process in play above with our running example.
   We have seen that $M_{(3,3)}$ lies in the $k$-algebra generated by the elementary symmetric polynomials, $M_{(3,2)}$ and $M_{(3,1)}$ (\ref{exmp2}); that $M_{(3,2)}$ and $M_{(3,1)}$ are in the $\GL$-representation generated by $M_{(5)}$ and $M_{(4)}$ (\ref{exmp3});
  and $M_{(5)}$ and $M_{(4)}\in P$ (\ref{exmp1}). Putting everything together, we see that $M_{(3,3)} = M_{(1,1)}^3$ lies in $P$. (Note that $M_{(1,1)}$ already lies in $P$, so this example is meant solely for illustrative purposes.)
\end{example}

Theorem~\ref{thm:pthroot} easily follows from the observation that $M_{\alpha}^p = M_{p \alpha}$ in char $p$.
\begin{proof}[Proof of Theorem~\ref{thm:pthroot}]
   It suffices to show that the $p$-th power of a generating set of $\Gamma^p(S)$ lies in $P$. We use the generating set from Theorem~\ref{thm:rydh}. The elements $\gamma^p(x_i)$ already lie in $P$, so it suffices to show $M_{\alpha}^p$ lies in $P$ for arbitrary $\alpha$. But $M_{\alpha}^p = M_{p \alpha}$, which lies in $P$ by Proposition~\ref{prop:main}.
\end{proof}

\subsection{Non-noetherianity of \texorpdfstring{$P$}{P}}
It suffices to prove Theorem~\ref{thm:glalg} after passing to the algebraic closure of $k$, since if $\overline{k} \otimes P$ is not noetherian, then neither is $P$ (we are implicitly using the fact that the category $\Pol_k$ behaves well under base change along field extensions). So, we further assume that $k$ is algebraically closed and, in particular, perfect.

The final ingredient we need is the notion of Frobenius splitting. 
For any $k$-algebra, we have the \textbf{Frobenius map} $F \colon T \to T$ where $f \mapsto f^p$ for all $f \in T$. We say $T$ is \textbf{$F$-split} if there exists a left inverse $\Psi$ to $F$ that satisfies $\Psi(a^p b) = a \Psi(b)$ and $\Psi(a + b) = \Psi(a) + \Psi(b)$ for all $a, b \in T$. The next result is well-known and first appeared in \cite[Page~77]{hh92bigcm} to our knowledge.

\begin{lemma}\label{lem:Fsplmon}
    Let $V$ be a permutation representation of a finite group $G$ over $k$. The invariant ring $\Sym(V)^G$ is $F$-split and sends homogeneous elements of positive degree to homogeneous elements of positive degree (including possibly, the zero element).
\end{lemma}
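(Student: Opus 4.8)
The plan is to produce a \emph{$G$-equivariant} Frobenius splitting of the ambient polynomial ring $\Sym(V)$ and then restrict it to the invariant subring. Since $V$ is a permutation representation, I would fix a basis $x_1,\dots,x_n$ of $V$ that $G$ permutes, so that $\Sym(V) = k[x_1,\dots,x_n]$ with $G$ acting by permuting the variables. Because $k$ is perfect (which is in force from the reduction in the previous paragraph), the image of $F$ on $\Sym(V)$ is the subring $k[x_1^p,\dots,x_n^p]$, over which $\Sym(V)$ is free with basis the set of ``small'' monomials $\mathcal{B} = \{\, x^b = x_1^{b_1}\cdots x_n^{b_n} \mid 0 \le b_i \le p-1 \,\}$. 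Thus every $g \in \Sym(V)$ has a unique expression $g = \sum_{b \in \mathcal{B}} g_b^{\,p}\, x^b$ with $g_b \in \Sym(V)$, and I define $\Psi(g) = g_0$, the coefficient of the monomial $x^0 = 1$.

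Next I would verify the two splitting identities, which are immediate from uniqueness of that decomposition: $\Psi$ is additive; $\Psi(a^p g) = \Psi\big(\sum_b (a g_b)^p x^b\big) = a g_0 = a\,\Psi(g)$; and $\Psi(F(a)) = \Psi(a^p) = a$, so $\Psi \circ F = \mathrm{id}$. Then comes the step that actually uses the hypothesis: $G$-equivariance. Each $\sigma \in G$ is a $k$-algebra automorphism of $\Sym(V)$ that permutes the variables, hence permutes $\mathcal{B}$, with $\sigma\cdot b = 0$ iff $b = 0$; applying $\sigma$ to $g = \sum_b g_b^{\,p} x^b$ and reindexing shows that the canonical decomposition of $\sigma(g)$ has $1$-coefficient $\sigma(g_0)$, i.e.\ $\Psi(\sigma g) = \sigma(\Psi g)$. (For a non-permutation representation $G$ would not stabilize the monomial basis $\mathcal{B}$, and this is precisely where the argument breaks.) It follows that $\Psi$ maps $G$-invariants to $G$-invariants, so it restricts to a left inverse $\Psi\colon \Sym(V)^G \to \Sym(V)^G$ of $F$ still satisfying the two identities, since these are inherited from $\Sym(V)$.

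Finally, for the statement about degrees: if $g$ is homogeneous of degree $d$, then each summand $g_b^{\,p} x^b$ is homogeneous of degree $d$, so $g_b$ is homogeneous of degree $(d-|b|)/p$ when $p \mid d - |b|$ and is $0$ otherwise; in particular $\Psi(g) = g_0$ is homogeneous of degree $d/p$ when $p \mid d$ and is $0$ otherwise, so a homogeneous element of positive degree maps to a homogeneous element of positive degree or to $0$. I do not expect a genuine obstacle here — the construction is the classical one — and the only point needing care is the bookkeeping in the $G$-equivariance verification, where one must make explicit that it relies on $G$ permuting $\mathcal{B}$, a feature special to permutation representations.
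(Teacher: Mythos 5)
Your proof is correct, but it goes by a different route than the paper. The paper works directly inside the invariant ring: the orbit sums $T_m$ of monomials form a $k$-basis of $\Sym(V)^G$, and the splitting is defined on this basis by $T_m \mapsto T_n$ if $m = n^p$ and $T_m \mapsto 0$ otherwise (with the verification of the splitting identities left to the reader). You instead build the classical splitting of the ambient polynomial ring, using that $\Sym(V)$ is free over the image of Frobenius with basis the monomials $x^b$, $0 \le b_i \le p-1$, take the $p$-th root of the $x^0$-coefficient, prove this map is $G$-equivariant because $G$ permutes that monomial basis and fixes $x^0$, and then restrict to $\Sym(V)^G$. What your route buys is that the identities $\Psi(a^pb)=a\Psi(b)$, additivity, and $\Psi\circ F=\mathrm{id}$ are immediate from uniqueness of the decomposition upstairs and are simply inherited by the invariant subring, so nothing is left unverified; it also isolates exactly where the permutation hypothesis enters ($G$ stabilizes the basis $\mathcal{B}$). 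What the paper's route buys is an explicit basis-level description of the splitting on $\Sym(V)^G$ itself, with homogeneity preservation visible at a glance, and no need to invoke the module structure over the Frobenius image. In fact the two maps coincide on invariants: applying your $\Psi$ to an orbit sum $T_{n^p}=T_n^p$ gives $T_n$, and gives $0$ on orbit sums of non-$p$-th-power monomials. One point you handle correctly but should keep explicit is the use of perfectness of $k$ (so that the coefficients $g_b$ can be taken in $\Sym(V)$, equivalently so that the image of $F$ is all of $k[x_1^p,\dots,x_n^p]$); this is legitimate here since the lemma is invoked after the paper's reduction to algebraically closed $k$, and the paper's own extension of its basis assignment to a left inverse of $F$ is implicitly $p^{-1}$-semilinear in the scalars as well.
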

\begin{proof}
Let $\{b_i \}$ be a $k$-basis of $V$ which is permuted by $G$. The monomials in the $b_i$ span $\Sym(V)$. Given a monomial $m$, let $T_m$ denote the sum of all monomials in the $G$-orbit of $m$. The elements $T_m$ form a $k$-basis of $\Sym(V)^G$. Define $\Psi$ by mapping $T_m$ to $T_n$ if $m = n^p$ for some monomial $n$, and otherwise to $0$. 
It preserves homogeneity. We leave the verification that this is an $F$-splitting to the reader. 
\end{proof}
We emphasize that neither the Frobenius map nor an $F$-splitting (if it exists) is a map of $\GL$-representations. 
\begin{proof}[Proof of Theorem~\ref{thm:glalg}]
Consider the $\GL$-stable ideal $I \subset P$ generated by the elements $M_{\alpha}^p$ for all nonzero $\alpha \in \bN^{\infty}$; these elements belong to $P$ by Theorem~\ref{thm:pthroot}.
Assume for the sake of contradiction that $I$ is finitely generated, say by the $\GL$-orbit of elements of degree $\leq pd$ for some $d$. 
Fix an integer $N > d$ and let $\omega = \sum_{i=1}^N e_i \in \bN^{\infty}$.
Since $M_{\omega}^p \in I$, we obtain an expression
\begin{align*}
M_{\omega}^p &=  g_1(M_{\alpha^1}^p)f_1 +  g_2(M_{\alpha^2}^p) f_2 + \ldots +  g_r (M_{\alpha^r}^p) f_r \\
&=  (g_1(M_{\alpha^1}))^p f_1 + (g_2(M_{\alpha^2}))^p f_2+ \ldots +  (g_r (M_{\alpha^r}))^pf_r
\end{align*}
for tuples $\alpha^i \in \bN^{\infty}$ with $|\alpha^i| \leq d$, homogeneous elements $f_i \in P_{>0}$, and $g_i \in \GL$ for all $i \in [r]$.
Let $\Psi$ be an $F$-splitting of $\Gamma^p(S)$ which sends homogeneous elements of positive degree to homogeneous elements of positive degree. Applying $\Psi$ to the above equation and using $\Psi(a^b p) = a \Psi(b)$, we have
\[
M_{\omega} =  g_1(M_{\alpha^1}) \Psi(f_1) +  g_2(M_{\alpha^2}) \Psi(f_2)+ \ldots +  g_r (M_{\alpha^r})\Psi(f_r)
\]
in $\Gamma^p(S)$. Since $f_i \in P_{>0}$, the element $\Psi(f_i) \in \Gamma^p(S)_{>0}$ for all $i \in [r]$. So the right side of the equation lies in $(\Gamma^p(S)_{>0})^2$. This contradicts the fact that $M_{\omega} \notin (\Gamma^p(S)_{>0})^2$ by Theorem~\ref{thm:rydh}. Therefore the $\GL$-stable ideal $I$ is not finitely generated, and so $P$ is not noetherian.
\end{proof}

\subsection{Concluding remarks}\label{ss:rmks}
\begin{enumerate}
    \item The representation $P_{>0}/(P_{>0})^2$ is supported in degrees $1, 2, \ldots, p$. We currently do not know whether $\GL$-algebras generated in degrees $< p$ are noetherian. 
    See \cite[Remark~3.4]{gan24non} for a potential approach to prove non-noetherianity of $\Sym(\lw^2(\bV))$ for odd $p$.
    \item Let $g(n) = \min \{d \in \bN \vert \exists f \in \Gamma^p(S)_{> n} \setminus (\Gamma^p(S)_{>0})^2 \text{ and } f^{p^d} \in P\}$. 
    Our proof of Theorem~\ref{thm:glalg} using Frobenius splittings would have not worked only if $\liminf_{n \to \infty} g(n) = \infty$.
    \item Consider the tensor ideals $F_1$ and $F_2$ of $\Pol_k$ generated by Frobenius-twisted representations and twice Frobenius-twisted representations respectively. Our counterexample persists in $\Pol_k/F_2$ but collapses in $\Pol_k/F_1$. The latter category is equivalent to the category of sequences of representations of the symmetric groups endowed with the induction product. The noetherian problem for twisted commutative algebras (TCAs), i.e.,~commutative algebra objects in $\Pol_k/F_1$, remains open in all characteristics.  
    \item 
    In \cite{gan25nc}, we showed that finite group invariants of the TCAs corresponding to $\Sym(k^p \otimes k^{\infty})$ are also not finitely generated in the modular setting, providing an analogue of Theorem~\ref{thm:rydh}. 
    The key difference in $\Pol_k$ seems to be our ability to use the Frobenius map to ``push" the infinite generators of this invariant subrings into a finitely generated subalgebra (made precise by Theorem~\ref{thm:pthroot}). We are unaware of any operations on TCAs that may permit us to execute a similar idea.
\end{enumerate}

\bibliographystyle{alpha}
\bibliography{bibliography}

\begin{thebibliography}{BDES23}

\bibitem[BDES22]{bdes23pol}
Arthur Bik, Jan Draisma, Rob~H. Eggermont, and Andrew Snowden.
\newblock The geometry of polynomial representations.
\newblock {\em International Mathematics Research Notices}, 2023(16):14131--14195, 08 2022.
\newblock \arxiv{2105.12621}.

\bibitem[BDES23]{bdes23tensors}
Arthur Bik, Jan Draisma, Rob Eggermont, and Andrew Snowden.
\newblock {Uniformity for limits of tensors}.
\newblock {\em Preprint}, 2023.
\newblock \arxiv{2305.19866}.

\bibitem[BDS25]{bds25charp}
Arthur Bik, Jan Draisma, and Andrew Snowden.
\newblock The geometry of polynomial representations in positive characteristic.
\newblock {\em Math. Z.}, 310(1):39, 2025.
\newblock \arxiv{2406.07415}.

\bibitem[DK02]{dk02cit}
Harm Derksen and Gregor Kemper.
\newblock {\em Computational invariant theory}, volume~I of {\em Invariant Theory and Algebraic Transformation Groups}.
\newblock Springer-Verlag, Berlin, 2002.
\newblock Encyclopaedia of Mathematical Sciences, 130.

\bibitem[DKW08]{dkw08sep}
Jan Draisma, Gregor Kemper, and David Wehlau.
\newblock Polarization of separating invariants.
\newblock {\em Canad. J. Math.}, 60(3):556--571, 2008.
\newblock Available \href{https://www.cambridge.org/core/services/aop-cambridge-core/content/view/1F33E4DEBE75B99270F6586C5EFA7E36/S0008414X00019659a.pdf/polarization-of-separating-invariants.pdf}{online}.

\bibitem[Dra19]{dra19top}
Jan Draisma.
\newblock {Topological {N}oetherianity of polynomial functors}.
\newblock {\em J. Amer. Math. Soc.}, 32(3):691--707, 2019.
\newblock \arxiv{1705.01419}.

\bibitem[Gan24a]{gan22ext}
Karthik Ganapathy.
\newblock {$\GL$-algebras in positive characteristic I: The exterior algebra}.
\newblock {\em Selecta Math. (N.S.)}, 30(4):Paper No. 63, 2024.
\newblock \arxiv{2203.03693}.

\bibitem[Gan24b]{gan24non}
Karthik Ganapathy.
\newblock {Non-noetherian GL-algebras in characteristic two}.
\newblock {\em Preprint}, 2024.
\newblock \arxiv{2408.04630}.

\bibitem[Gan25]{gan25nc}
Karthik Ganapathy.
\newblock {Noncommutative invariants of finite and classical groups}.
\newblock {\em Preprint}, 2025.
\newblock \arxiv{2502.16675}.

\bibitem[Gro07]{gross07vect}
F.~D. Grosshans.
\newblock Vector invariants in arbitrary characteristic.
\newblock {\em Transform. Groups}, 12(3):499--514, 2007.
\newblock \arxiv{math/0605690}.

\bibitem[HH92]{hh92bigcm}
Melvin Hochster and Craig Huneke.
\newblock Infinite integral extensions and big {C}ohen-{M}acaulay algebras.
\newblock {\em Ann. of Math. (2)}, 135(1):53--89, 1992.

\bibitem[HS24]{hs24tensor}
Nate Harman and Andrew Snowden.
\newblock {Tensor spaces and the geometry of polynomial representations}.
\newblock {\em Preprint}, 2024.
\newblock \arxiv{2407.19132}.

\bibitem[Nee91]{nee91cyc}
Amnon Neeman.
\newblock Zero cycles in {${\bf P}^n$}.
\newblock {\em Adv. Math.}, 89(2):217--227, 1991.

\bibitem[Ryd07]{ryd07gens}
David Rydh.
\newblock A minimal set of generators for the ring of multisymmetric functions.
\newblock {\em Ann. Inst. Fourier (Grenoble)}, 57(6):1741--1769, 2007.
\newblock \arxiv{0710.0470}.

\bibitem[Ryd08]{rydh08phd}
David Rydh.
\newblock {\em Families of cycles and the Chow scheme}.
\newblock PhD thesis, KTH, 2008.
\newblock Available \href{http://web.archive.org/web/20250617051341/https://people.kth.se/~dary/thesis/thesis.pdf}{online}.

\bibitem[Sno13]{sno13delta}
Andrew Snowden.
\newblock {Syzygies of {S}egre embeddings and {$\Delta$}-modules}.
\newblock {\em Duke Math. J.}, 162(2):225--277, 2013.
\newblock \arxiv{1006.5248}.

\bibitem[vdK05]{vdK05err}
Wilberd van~der Kallen.
\newblock {Erratum to sublemma A.5.1}.
\newblock Available \href{ https://web.archive.org/web/20250218161521/https://webspace.science.uu.nl/~kalle101/errbmod.pdf}{online.}, 2005.

\bibitem[VL75]{vl75vars}
M.~R. Vaughan-Lee.
\newblock Abelian-by-nilpotent varieties of {L}ie algebras.
\newblock {\em J. London Math. Soc. (2)}, 11(part):263--266, 1975.

\bibitem[Wey39]{weyl39}
Hermann Weyl.
\newblock {\em {The {C}lassical {G}roups. {T}heir {I}nvariants and {R}epresentations}}.
\newblock Princeton University Press, Princeton, NJ, 1939.

\end{thebibliography}
\end{document}